\newtheorem{lem}{{\scshape Lemma}}
\newtheorem{example}{{\scshape Example}}
\newtheorem{ttt}{{\scshape Theorem}}
\newtheorem{prp}{{\scshape Proposition}}
\begin{document}
\title{Virtual quandle for links in lens spaces}
\author{A.~Cattabriga\footnote{The author is supported by GNSAGA of INdAM and University of Bologna, funds for selected research topics.},~~~ T.~Nasybullov\footnote{The author is supported by the Research Foundation -- Flanders (FWO), app. 12G0317N.}}
\date{}
\maketitle
\begin{abstract}
We construct a virtual quandle for links in lens spaces $L(p,q)$, with $q=1$.  This invariant has two valuable advantages over an ordinary fundamental quandle for links in lens spaces: the virtual quandle is an essential invariant and the presentation of the virtual quandle can be easily written from the band diagram of a link.\\

\noindent \emph{Mathematics Subject Classification 2010:} Primary 57M27, 08A99; Secondary 57M10.\\

\noindent\emph{Keywords:} links in lens spaces, link invariants, virtual quandles.
\end{abstract}

\section{Introduction}
Over the years knot theory has worked with knots and links in the three-dimensional sphere $S^3$. However, together with improving knowledge about 3-manifolds, great attention has been paid to knots and links  in manifolds different from $S^3$: Seifert fiber spaces \cite{gabman} (in particular, lens spaces \cite{catmanmul,catmanrig,dro,hospr,manf}), thickened surfaces \cite{ilyman, kau}, arbitrary 3-manifolds \cite{fenrou, lamrou}. In this paper we focus on knots and links in lens spaces $L(p,q)$ with $q=1$. These manifolds are the simplest ones (beside $S^3$) where we can study knots and links since every link in $L(p,1)$ can be presented by integer $p$-surgery over the unknot (in cases of other manifolds the surgery link can be much more difficult).
 Moreover, there are interesting articles explaining applications of knots in lens spaces to other fields of science: \cite{ste} exploits them to describe topological string theories and
\cite{bucmau} uses them to describe the resolution of a biological DNA recombination
problem.

A lot of link invariants can be generalized from links in $S^3$ to links in $L(p,q)$. Kauffman bracket skein module \cite{hospr}, knot Floer homology \cite{bakgrihed}, HOMFLY-PT polynomial \cite{cor} all have analogues in lens spaces.

Despite the fact that some invariants can be generalized to links in lens spaces, sometimes it is very difficult to use them.
For example, the fundamental quandle of a link  which has a very simple topological description for links in $S^3$ can be easily generalized to links in $L(p,q)$. However, an explicit procedure to write down a  presentation of a fundamental quandle for links in $L(p,q)$ by generators and relations directly from a (band, disc, grid) diagram is known only in the case $(p,q)=(2,1)$ (see \cite{gor}), so it is almost impossible to use this invariant. Another disadvantage of the ordinary fundamental quandle for links in $L(p,q)$ is that the
fundamental quandle of the link $K\subset L(p,q)$ is isomorphic to the fundamental quandle of its lift $\pi^{-1}(K)\subset S^3$, where $\pi:S^3\to L(p,q)$ denotes the universal covering map. So we cannot distinguish links  with equivalent liftings. Such invariants are called \emph{inessential} \cite{catmanrig}. In this paper we define  the notion of  virtual quandle for links in $L(p,1)$. Virtual quandles were firstly introduced in \cite{man,man2} as generalizations of quandles for virtual links. Even if in the present paper we do not work with virtual links, we use the name ``virtual quandle'' since our invariant has the same algebraic structure.

The paper is organized as follows. In Section \ref{band} we review the definition of  band diagrams for links in lens spaces, while in Section \ref{quandle} we recall those of quandle and virtual quandle, proving some results that will be useful in the following.  In Section \ref{s33} we construct a virtual quandle  for links in $L(p,1)$ and  prove that it is an invariant (see Theorem \ref{tt2}). Finally, in Section \ref{exex} we investigate  some properties of this invariant, proving  that it is   essential  (see Proposition \ref{esss}).

\section{Band diagrams for links in lens spaces}\label{band}
In this section we recall the notion of  band diagrams for links in lens spaces (which were firstly introduced in \cite{hospr}) and recall how  Reidemeister moves for such band diagrams look like. 
   
We start by giving  the definition of Dehn surgery in $S^3$. Let $K$ be a knot in $S^3$, denote by $N(K)$  a closed tubular neighborhood of $K$ and let $\gamma$ be a simple closed curve on $\partial N(K)$. \textit{The Dehn surgery on $K$ along $\gamma$} is the manifold $M$ obtained by gluing  $S^3\setminus\textup{int}(N(K))$ with  a solid torus $D^2\times S^1$  along their boundaries  via a homeomorphism which identifies $\gamma$ with the boundary of a   meridian disc. The curve $\gamma$ is called the \textit{slope} of the surgery and the homeomorphism type  of the resulting manifold depends only on the homology class of $\gamma$ in $\partial N(K)$, up to orientation change. We fix a base $(m,l)$ for $H_1(\partial N(K))$, such that $m$ is a meridian of $K$  and  $l$ has  algebraic intersection 1
with $m$. If the homology class of $\gamma$ is $pm+ql$, with $p,q\in \mathbb Z$, we call the surgery \textit{rational} and we say that it has  
\emph{framing index} $p/q$.

Every link  $L$ in a manifold $M$ obtained via a $p/q$-surgery over  $K$, can be presented by a diagram of a link $L' \cup K$ in $S^3$, where the knot $K$ is equipped with a  number $p/q\in\mathbb Q\cup\{\infty\}$,  the surgery along $K$ gives $M$ and $L$ is the image of $L'$ under the surgery operation. This presentation of $L$ is called \emph{a mixed link diagram} and the knot $K$ is called the \emph{surgery knot}. In order to simplify notation we use the same symbol $L$ to denote both $L'$ and $L$. 

The \textit{lens space} $L(p,q)$ is the 3-manifold obtained by  a rational surgery on the unknot $U$ in $S^3$ with framing index $-p/q$. So, 
a link $L$ in $L(p,q)$ can be described by a mixed link diagram of the link $L \cup U$ (see, for example, Figure \ref{mixxx}).  

\begin{figure}[bh]
\noindent\centering{
\includegraphics[width=45mm]{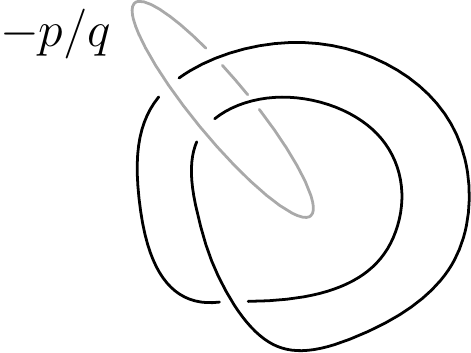}}
\caption{Example of a mixed link diagram of a link in $L(p,q)$.}
\label{mixxx}
\end{figure}

In order to construct a band diagram for a link $L$ in $L(p,q)$ we present $S^3$ as a one-point compactification of $\mathbb{R}^3$ and fix coordinates $(x_1,x_2,x_3)$ in $\mathbb{R}^3$ such that the surgery knot (which is the unknot in $S^3$) is described as the $x_3$ axis in $\mathbb{R}^3$. A regular orthogonal projection of a mixed link diagram onto the plane $x_1x_2$ is called \emph{a punctured disc diagram} of $L$ (see Figure \ref{punban} on the left). In a punctured disc diagram the surgery knot is depicted as  a puncture. A \emph{band diagram} for a link $L$ in $L(p,q)$ can be obtained from a punctured disk diagram by (i)
cutting a disc containing the punctured disk diagram and centered in the pucture along
a half-line starting from the puncture and avoiding the crossings of $L$ and (ii) deforming the resulting 
annulus into a rectangle (see Figure \ref{punban} on the right). Clearly, an orientation of the link induces an orientation on the resulting  band diagram. 

\begin{figure}[bht!]
\noindent\centering{
\includegraphics[width=110mm]{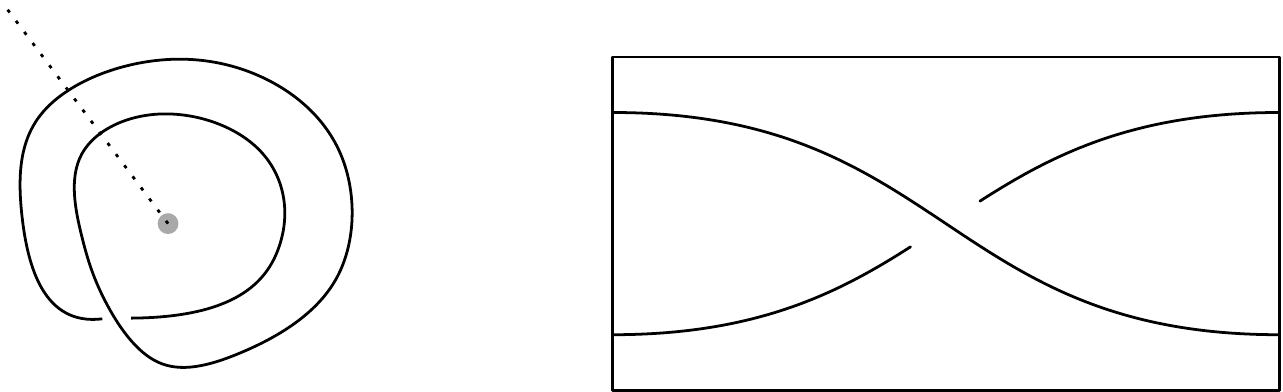}
\caption{A punctured disc diagram (on the left) and the  corresponding band diagram (on the right).}
}
\label{punban}
\end{figure}

The intersection points of the link with the left (resp. right) side of the band diagram are called \emph{left (resp. right) boundary points}. Left and right boundary points together are called \emph{boundary points} of a band diagram. 

Two band diagrams are equivalent under the three classical Reidemeister moves $R_1$, $R_2$, $R_3$ inside the rectangle and a global move called  {\it the $SL$-move}  (see \cite{lamrou}). In the case of lens spaces $L(p,1)$ the $SL$-move is depicted in Figure \ref{SL}.

\begin{figure}[bh]
\noindent\centering{
\includegraphics[width=55mm]{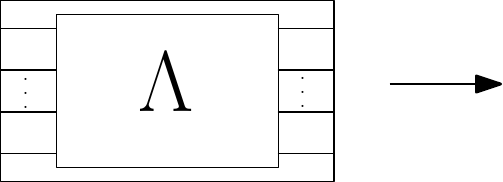}\includegraphics[width=80mm]{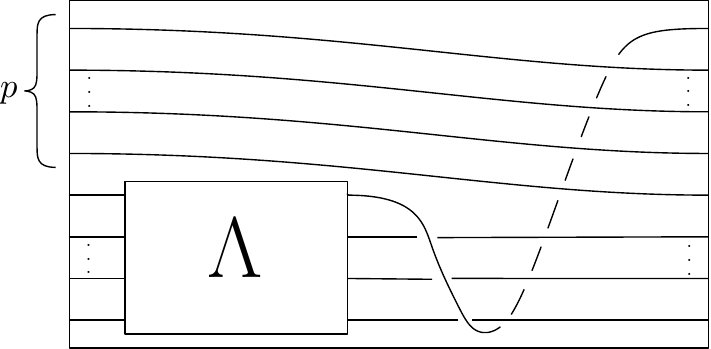}}
\caption{The $SL$-move for links  in $L(p,1)$.}
\label{SL}
\end{figure}

\section{Quandles and virtual quandles}\label{quandle}
In this section we review some facts about quandles and virtual quandles from the algebraic point of view.

\subsection{Quandles}
A \emph{ quandle} is a set $Q$ endowed with a binary algebrabraic operation $(x,y)\mapsto x^y$ satisfying  the following axioms:
\begin{enumerate}
\item for all $x\in Q$ $x^x=x$;
\item for all $x\in Q$ the map $i_x:y\mapsto y^x$ is a bijection on $Q$;
\item for all $x,y,z\in Q$ $x^{yz}=x^{zy^z}$.
\end{enumerate}
Here for simplicity we use symbols $x^{yz}$ and $x^{y^z}$ to denote $(x^y)^z$ and $x^{(y^z)}$ respectively.

The notion of a quandle was first introduced by Joyce \cite{joy} and, independently, Matveev \cite{mat} as an invariant for links in the $3$-sphere. After that quandles were also studied from an algebraic point of view (see, for example, \cite{car}).

\begin{example}{\rm For a group $G$  and a number $k\in\mathbb{N}$ denote by $Conj_k~G$ the quandle having the same underlying set as $G$ and with operation $x^{y}=y^{-k}xy^k$. This quandle is called the  \emph{$k$-th conjugacy quandle.}}
\end{example}

A bijection $\varphi:Q\to Q$ is called an \textit{automorphism} of a quandle $Q$ if  for all $x,y \in Q$ the equality $\varphi(x^y)=\varphi(x)^{\varphi(y)}$ holds. In particular, from the second and the third axioms follows that the map $i_x$ is an automorphism of $Q$ for each $x\in Q$. The group generated by $i_x$ for all $x\in Q$ is called \emph{the group of inner automorphisms of $Q$} and is denoted by ${\rm Inn}(Q)$.
We denote by $y^{x^{-1}}$ the image of the element $y$ under the map $i_x^{-1}$: this notation should not suggest that $x^{-1}$ is an element of $Q$, however it is practically convenient since $y^{xx^{-1}}=y^{x^{-1}x}=y$ for all $x,y\in Q$.  From the third axiom of quandles for all $x,y\in Q$ we have $x^{yz}=x^{zy^z}$, and replacing $x$ by $x^{z^{-1}}$, we obtain the equality
\begin{equation}\label{nomore}
x^{z^{-1}yz}=x^{y^z}.
\end{equation}
If an element of a quandle $Q$ has the form $x^w$ for $x\in Q$ and $w\in F(Q)$, then we say that the element $x$ is on the {\it  primary level} and the element $w$ is on the {\it  operator level}. Equality  (\ref{nomore}) says that we do not need more than these two levels and so every element of a quandle has the form  $x^w$. Moreover,  if $x^v=x^w$ for all $x\in Q$ then we use the notation $v\equiv w$ and call this expression  \emph{the operator relation} (see \cite{fenrou}). 

\subsection{Virtual quandles}
A \emph{virtual quandle} is a set $VQ$ endowed with a binary algebraic operation $(x,y)\mapsto x^y$ and a unary algebraic operation $x\mapsto f(x)$  satisfying  the following axioms:
\begin{enumerate}
\item $VQ$ with the operation $(x,y)\mapsto x^y$ is a quandle;
\item $f$ is a bijection of  $VQ$ such that  $f(x^y)=f(x)^{f(y)}$.
\end{enumerate}

\begin{example}{\rm Let $G$ be a group, $k\in\mathbb{N}$ and $g$ be a fixed element of $G$. If we add to the quandle $Conj_k~G$ the operation $i_g$, then we obtain a virtual quandle.}
\end{example}

A bijection $\varphi:VQ\to VQ$ is an \textit{automorphism} of the virtual quandle $VQ$ if for all $x,y\in VQ$ we have $\varphi(x^y)=\varphi(x)^{\varphi(y)}$ and $\varphi(f(x))=f(\varphi(x))$. The selfmap $f$ is clearly an automorphism of $VQ$, moreover $f$ belongs to the center of the group ${\rm Aut}(VQ)$. On the contrary the bijection $i_x$ is not necessarily an automorphism of a virtual quandle.
Indeed if $i_x$ is an automorphism, then for all $y\in VQ$ we have $i_x(f(y))=f(i_x(y))$ and $f(y)^x=f(y)^{f(x)}$. So, $i_x$ is an automorphism of $VQ$ if and only if $x\equiv f(x)$, for all $x\in VQ$.

If $VQ$ is a virtual quandle, then we denote by $VQ^{-}$ the quandle obtained from $VQ$ just forgetting about the operation $f$. Every automorphism of $VQ$ induces an automorphism of $VQ^{-}$ and every map $i_x$ induces an automorphism of $VQ^{-}$.

As for groups, it is possible  \textit{to present (virtual) quandle by generators and relations} as follows.  
Given  a set of letters $X=\{x_1,x_2,\dots\}$ denote by $A(X)$ the inductively defined set of words obtained from $X$ by all possible subsequent application of the operations $(x,y)\mapsto x^y$, $(x,y)\mapsto x^{y^{-1}}$, $x\mapsto f(x)$, $x\mapsto f^{-1}(x)$. Let $\{r_i,s_i~|~i\in I\}$ be some set of words from $A(X)$ and $R=\{r_i=s_i~|~i\in I\}$ be a set of formal equalities. Denote by $\sim$ an equivalence relation on $A(X)$ which consists of the following relations:
\begin{enumerate}
\item  for all $x,y\in A(x)$ $x\sim \left(x^y\right)^{y^{-1}}\sim \left(x^{y^{-1}}\right)^y$;
\item  for all $x\in A(X)$ $x\sim f^{-1}\left(f(x)\right)\sim f\left(f^{-1}(x)\right)$;
\item  for all $x\in A(X)$ $x\sim x^x\sim x^{x^{-1}}$;
\item $\left(x^y\right)^z\sim\left(x^z\right)^{y^z}$ for all $x,y,z\in A(X)$;
\item for all $x,y\in A(X)$ $f\left(x^{y^{-1}}\right)\sim f(x)^{f(y)^{-1}}$, $f^{-1}\left(x^{y^{-1}}\right)\sim f^{-1}(x)^{f^{-1}(y)^{-1}}$,\\
  $f(x^y)\sim f(x)^{f(y)}$, $f^{-1}(x^y)\sim f^{-1}(x)^{f^{-1}(y)}$;
\item $r_i\sim s_i$ if the equality $r_i=s_i$ belongs to $R$.
\end{enumerate}
The quotient $A(X)/_{\sim}$ with well defined operations $(x,y)\mapsto x^y$ and  $x\mapsto f(x)$ is obviously a virtual quandle. We say that this virtual quandle is \emph{presented by the set of generators $X$ and the set of relations $R$} and denote it by $\langle X~|~R\rangle$. Every relation in the virtual quandle $VQ$ has the form $x^v=y^w$, where $x,y$ are elements of $VQ$ and $v,w$ belong to the free group $F(VQ)$ generated by $VQ$. 


\subsection{$n$-splitting automorphisms of quandles}
\label{splittt}

Let $f$ be an automorphism of a quandle $Q$. In general we cannot write the equalities $f(x^{y^{-1}})=f(x)^{f(y^{-1})}$ and $f(x^{yz})=f(x)^{f(yz)}$ since the expressions $f(y^{-1})$ and $f(yz)$ are not defined. However it is easy to show that the following equalities hold $$f(x^{y^{-1}})=f(x)^{f(y)^{-1}}~~~~~~~~~f(x^{yz})=f(x)^{f(y)f(z)}.$$

Moreover, an automorphism $f$ of a quandle $Q$ induces an automorphism $f_*$ of ${\rm Inn}(Q)$ which acts on the generators of ${\rm Inn}(Q)$ by the rule $f_*(i_x)=i_{f(x)}$.
As a consequence, on the operator level  we can write expressions of type $f(yz)$ and $f(y^{-1})$ meaning that $x^{f(yz)}=f_*(i_zi_y)(x)$ and $x^{f(y^{-1})}=f_*(i_y^{-1})(x)$. Moreover if $x$ belongs to the free group $F(Q)$ generated by the elements of $Q$ then on the operator level we can write expressions of type $f(x)$.

An automorphism $f$ of a quandle $Q$ is said to be an \emph{$n$-splitting automorphism} of $Q$ if for every element $x\in {\rm Inn}(Q)$ the equality $xf_*(x)\dots f_*^{n-1}(x)=1$ holds. In other words, $f$ is an $n$-splitting automorphism of $Q$ if $xf(x)\dots f^{n-1}(x)\equiv1$ for all $x\in F(Q)$.

 \begin{example}{\rm Let $G$ be a group, $N=\langle x^n~|~x\in G\rangle$ be a subgroup of $G$ generated by $n$-th powers of elements from $G$ and $g\in G/N$ be a fixed element of the quotient. Then $i_g$ is an $n$-splitting automorphism of the quandle $Conj_1~G/N$.}
 \end{example}

 We end the section proving some useful properties of $n$-splitting automorphisms.

\begin{lem}\label{Lem1}  Let $f\in{\rm Aut}(Q)$ be an $n$-splitting automorphism of a quandle $Q$. The following properties hold:

\begin{enumerate}
 \item[\textup{1)}] $f_*^n(x)= x$ for all $x\in {\rm Inn}(Q)$;
 \item[\textup{2)}]  $f_*^{n-1}(x)f_*^{n-2}(x)\dots x=1$ for all  $x\in {\rm Inn}(Q)$;
 \item[\textup{3)}] for every $z\in Q$ the automorphism $g=i_zf$ is an $n$-splitting automorphism of $Q$. Moreover if $f^n=id$, then $g^n=id$.
\end{enumerate}
\end{lem}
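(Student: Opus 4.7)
Parts (1) and (2) I expect to dispatch by direct manipulation of the defining identity $x f_*(x) \cdots f_*^{n-1}(x) = 1$. For (1), applying $f_*$ to the identity gives $f_*(x) f_*^2(x) \cdots f_*^n(x) = 1$; since $f_*(x) \cdots f_*^{n-1}(x) = x^{-1}$ by the original, this forces $x^{-1} f_*^n(x) = 1$, i.e.\ $f_*^n(x) = x$. For (2), taking inverses in the identity and using $f_*^k(x)^{-1} = f_*^k(x^{-1})$ yields $f_*^{n-1}(x^{-1}) \cdots f_*(x^{-1}) \cdot x^{-1} = 1$; since this is universally quantified over $\mathrm{Inn}(Q)$, relabelling $x^{-1}$ as $x$ finishes the claim.

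For (3), my plan rests on the elementary group-theoretic identity, valid in any group and proved by a short induction on $n$:
$$\prod_{k=0}^{n-1} h^k x h^{-k} = (xh)^n h^{-n}.$$
Because $h_*^k(x) = h^k x h^{-k}$ inside $\mathrm{Aut}(Q)$, this reformulates the $n$-splitting condition on an automorphism $h$ as the single requirement that $(xh)^n = h^n$ for every $x \in \mathrm{Inn}(Q)$.

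With this reformulation in hand, for $g = i_z f$ and an arbitrary $x \in \mathrm{Inn}(Q)$, the element $y := x\, i_z$ also lies in $\mathrm{Inn}(Q)$ (since $i_z$ does), so the $n$-splitting of $f$ yields $(xg)^n = ((x i_z) f)^n = (y f)^n = f^n$. Specialising to $x = 1$ gives $g^n = f^n$, and so $(xg)^n = g^n$ for all $x \in \mathrm{Inn}(Q)$, proving that $g$ is $n$-splitting; the \emph{moreover} statement then follows at once from $g^n = f^n$. The main subtlety I anticipate is verifying the telescoping identity in the correct order, while the fact that $\mathrm{Inn}(Q)$ is preserved by conjugation by $f$ or $i_z$ is the standard relation $\varphi i_x \varphi^{-1} = i_{\varphi(x)}$.
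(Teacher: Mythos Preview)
Your arguments for all three parts are correct. Parts 1) and 2) are essentially as in the paper; the paper derives 2) from 1) and the splitting relation, while you obtain it directly by inverting and relabelling $x^{-1}\mapsto x$, which is equally valid and does not even require 1).

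For part 3) you take a genuinely different route. The paper expands $y\,g(y)\cdots g^{n-1}(y)$ on the operator level via $g^k(y)\equiv f^k(y)^{f^{k-1}(z)\cdots z}$, telescopes the product to
\[
(yz^{-1})\,f(yz^{-1})\cdots f^{n-1}(yz^{-1})\cdot f^{n-1}(z)f^{n-2}(z)\cdots z,
\]
and then invokes both the $n$-splitting of $f$ (for the first factor) and part 2) (for the second). You instead work entirely inside the group $\mathrm{Aut}(Q)$: the elementary identity $\prod_{k=0}^{n-1} h^k x h^{-k} = (xh)^n h^{-n}$ recasts ``$h$ is $n$-splitting'' as ``$(xh)^n = h^n$ for every $x\in\mathrm{Inn}(Q)$'', after which the claim for $g = i_z f$ follows by the one-line substitution $x\mapsto x\,i_z$. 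This is more conceptual, avoids the telescoping bookkeeping, and delivers the exact equality $g^n = f^n$ in $\mathrm{Aut}(Q)$ immediately (hence the \emph{moreover} clause), whereas the paper obtains $g^n(x)=f^n(x)^{f^{n-1}(z)\cdots z}$ and must appeal to part 2) a second time. The paper's direct computation, on the other hand, stays within the operator-level formalism used throughout the applications in Section~\ref{s33}.
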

\begin{proof}
1) From the $n$-splitting relation we have
$$1= f_*(x)f_*^2(x)\dots f_*^{n}(x)= x^{-1}xf_*(x)f_*^2(x)\dots f_*^{n}(x)= x^{-1}f_*^n(x).$$
Therefore $f_*^n(x)= x$.

2) follows from 1) and the $n$-splitting relation.

3) Let $y$ be an arbitrary element of $F(Q)$, then the following sequence of equalities holds
\begin{align}
\notag yg(y)\dots g^{n-1}(y)&\equiv yf(y)^zf^2(y)^{f(z)z}\dots f^{n-1}(y)^{f^{n-2}(z)\dots z}\\
\notag&\equiv yz^{-1}f(y)zz^{-1}f(z)^{-1}f^2(y)f(z)z\dots f^{n-1}(y)f^{n-2}(z)\dots z\\
\notag&\equiv yz^{-1}f(yz^{-1})\dots f^{n-1}(yz^{-1})f^{n-1}(z)f^{n-2}(z)\dots z\equiv 1.
\end{align}
In the last equality we used item 2) of the lemma. So $g$ is an $n$-splitting automorphism of $Q$. For every element $x\in Q$ we have
$g^n(x)=f^n(x)^{f^{n-1}(z)f^{n-2}(z)\dots z}=f^n(x)$,
therefore if $f^n=id$, then $g^n=id$. 
\end{proof}

Note that equality 1) of  Lemma \ref{Lem1} does not imply the equality $f^n(x)=x$. It just says that $f^n(x)\equiv x$ for all $x\in F(Q)$.

\section{The virtual quandle for links in $L(p,1)$}\label{s33}
In this section we define the notion of the virtual quandle $VQ(K)$ for a link $K$ in a lens space $L(p,1)$ and prove that it is invariant under all moves $R_1$, $R_2$, $R_3$, $SL$. 

An \emph{arc} of a band diagram of a link $K$ in $L(p,1)$ is a connected  part of the diagram  bounded by two overpasses, or two boundary points, or one overpass and one boundary point. Referring to Figure \ref{Ban}, we label (i) the arcs which are bounded by (at least) one left boundary point by the letters $x_1,x_2,\dots,x_n$, (ii) the arcs which are bounded by (at least) one right boundary point  by the letters $y_1,y_2,\dots,y_n$ and (iii) all the remaining arcs by the letters $z_1, z_2,\dots,z_k$.  Note that some of $x_1,\dots,x_n,y_1,\dots y_n, z_1,\dots,z_k$ may be identified. Choose an orientation of the link and for each right boundary point $y_i$ define the number $\varepsilon_i$ by the following rule: if the arrow $y_i$ runs from the left to the right, then $\varepsilon_i=1$, otherwise $\varepsilon_i=-1$.

\begin{figure}[bht!]
\noindent\centering{
\includegraphics[width=50mm]{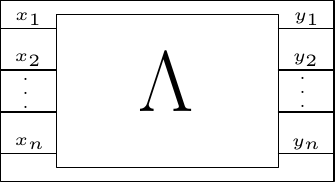}}
\caption{Labelling arcs of a band diagram.}
\label{Ban}
\end{figure}

We define the \emph{virtual quandle} $VQ(K)$ of a link $K\subset L(p,1)$ as the virtual quandle having the presentation $VQ(K)=\langle X~|~I\cup B\cup S\rangle$, where:

\begin{itemize}
\item \emph{the set of generators $X$} is $\{x_1,\ldots,x_n,y_1,\ldots, y_n, z_1,\ldots,z_k\}$; 

\item \emph{the set of inner relations $I$} consists of relations which can be written from the part $\Lambda$ of the band diagram contained in the interior of the rectangle. Here we have (i) relations of identifications between some of $x_1,\dots,x_n$, $y_1,\dots y_n$, $z_1,\dots,z_k$ (if it is necessary) and (ii)  as in the classical case, for each crossing,  the  relation  $x^y=z$,  where $x$, $y$, $z$ are the  labels of the arcs involved in the crossing as depicted in   Figure \ref{crosrule};

\begin{figure}[hbt!]
\noindent\centering{
\includegraphics[width=20mm]{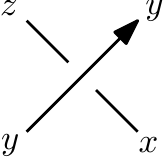}}
\caption{Labels of the arcs in classical crossings.}
\label{crosrule}
\end{figure}
\item \emph{the set of boundary relations $B$} consists of the $n$ relations $f(x_i)=y_i$ for $i=1,\ldots, n$ and one operator relation $y_n^{\varepsilon_n}y_{n-1}^{\varepsilon_{n-1}}\dots y_1^{\varepsilon_1}\equiv1$; 

\item \emph{the set of splitting relations $S$} consists of relations which state that  $f$ is a $p$-splitting automorphism of $VQ(K)^-$ and has order $p$:
$xf(x)\dots f^{p-1}(x)\equiv1$ for all $x\in F(X)$ and $f^p(x)=x$ for all $x\in VQ(K)$.

\end{itemize}

\begin{example}\label{xxx} 
{\rm Let $K$ be the knot in $L(p,1)$, whose band diagram is depicted in Figure~\ref{exx4}. 
\begin{figure}[bht!]
\noindent\centering{
\includegraphics[width=50mm]{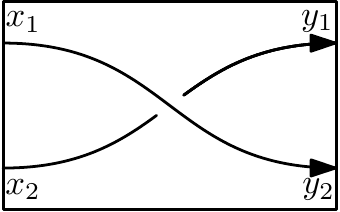}}
\caption{Example of a band diagram for the knot $K$ in $L(p,1)$.}
\label{exx4}
\end{figure}

We have four generators $x_1$, $x_2$, $y_1$, $y_2$. Since both $y_1$, $y_2$ run from the left to the right, $\varepsilon_1=\varepsilon_2=1$. Inner relations are $x_1=y_2$, $x_2^{x_1}=y_1$. Boundary relations are $f(x_1)=y_1$, $f(x_2)=y_2$, $y_2y_1\equiv1$. Splitting relations are $xf(x)\dots f^{p-1}(x)\equiv1$, $f^p(x)=x$ for all $x$. Therefore $VQ(K)$ has the following presentation
\begin{align}
\notag VQ(K)=\langle x_1,x_2,y_1,y_2~|~&x_1=y_2,x_2^{x_1}=y_1,f(x_1)=y_1,f(x_2)=y_2,y_2y_1\equiv1,\\
\notag&\forall x~~xf(x)\dots f^{p-1}(x)\equiv1,\forall x~~f^p(x)=x \rangle.
\end{align}
Using the relations $x_1=y_2$, $x_2^{x_1}=y_1$ we can delete the elements $y_1,y_2$ from the set of generators. Then $VQ(K)$ can be rewritten as
\begin{align}
\notag VQ(K)=\langle x_1, x_2~|~&f(x_1)=x_2^{x_1}, f(x_2)=x_1, x_2x_1\equiv1\\
\notag &\forall x~~xf(x)\dots f^{p-1}(x)\equiv1,\forall x~~f^p(x)=x\rangle.
\end{align}
Using the relation $f(x_2)=x_1$ we can delete $x_1$ from the set of generators. Then if we replace the letter $x_2$ just by $x$ we have
$$VQ(K)=\langle x~|~f^2(x)=x^{f(x)}, xf(x)\equiv1,\forall x~~xf(x)\dots f^{p-1}(x)\equiv1,\forall x~~f^p(x)=x\rangle.
$$
From the relations $f^2(x)=x^{f(x)}$ and $xf(x)\equiv1$ we obtain the relation $f^2(x)=x$. If $p$ is odd, then from the relations $f^2(x)=x$ and $f^p(x)=x$ we get $f(x)=x$ and the virtual quandle has the following presentation
$$VQ(K)=\langle x~|~f(x)=x\rangle.$$

If $p$ is even, then the relation $f^p(x)=x$ is a consequence of the relation $f^2(x)=x$, and the relation $xf(x)\dots f^{p-1}(x)\equiv1$ can be obtained from the relation $xf(x)\equiv1$. Then the virtual quandle has the following presentation.
$$VQ(K)=\langle x~|~f^2(x)=x, xf(x)\equiv1\rangle$$
}
\end{example}

\begin{example} {\rm Let $U$ be the unknot in $L(p,1)$. A band diagram for $U$ is depicted in Figure~\ref{exx45}. 
\begin{figure}[H]
\noindent\centering{
\includegraphics[width=50mm]{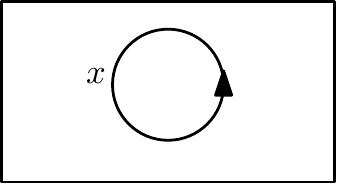}
}
\caption{Band diagram for the  unknot in $L(p,1)$.}
\label{exx45}
\end{figure}
Here we have only one generator $x$ and only splitting relations, so 
$$VQ(U)=\langle x~|~\forall x~~xf(x)\dots f^{p-1}(x)\equiv1,\forall x~~f^p(x)=x\rangle.$$
}
\end{example}

\begin{example}\label{xx} 
{\rm Let $L$ be the $n$-component link  in $L(p,1)$, whose band diagram is depicted in Figure~\ref{exx6}. 

\begin{figure}[H]
\noindent\centering{
\includegraphics[width=50mm]{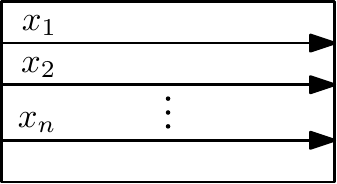}
}
\caption{Band diagram for the link $L$ in $L(p,1)$. }
\label{exx6}
\end{figure}

We have $n$ generators $x_1,\dots,x_n$ and no inner relations ($I=\emptyset$). Boundary relations are $f(x_1)=x_1$, $\dots$, $f(x_n)=x_n$, $x_nx_{n-1}\dots x_1\equiv1$ and splitting relations are $xf(x)\dots f^{p-1}(x)\equiv1$ and $f^p(x)=x$ for all $x$, so the virtual quandle is 
\begin{align}
\notag VQ(L)=\langle x_1,\dots,x_n&~|~ f(x_1)=x_1,\dots,f(x_n)=x_n,x_n\dots x_1\equiv1,\\
\notag&~~~\forall x~~xf(x)\dots f^{p-1}(x)\equiv1,\forall x~~f^p(x)=x\rangle.
\end{align}
From the relations $f(x_1)=x_1,\dots,f(x_n)=x_n$, it follows that $f(x)=x$ for all $x$. Then the virtual quandle has the following presentation
$$
\notag VQ(L)=\langle x_1,\dots,x_n~|~ f(x_1)=x_1,\dots,f(x_n)=x_n,x_n\dots x_1\equiv1, \forall x~~x^p\equiv1\rangle.$$
}
\end{example}

\begin{ttt}\label{tt2} The virtual quandle $VQ(K)$ is an invariant for links in $L(p,1)$.
\end{ttt}
\begin{proof} In order to prove that the virtual quandle is an invariant for links in  $L(p,1)$ we need to prove that it is invariant under all the Reidemeister moves $R_1$, $R_2$, $R_3$ and the $SL$-move. The proof of the invariance under the moves $R_1$, $R_2$, $R_3$ is the same as in the case of ordinary quandle for links in $S^3$ (see, for example, the proof of \cite[Theorem 3.4]{ilyman}). So we need to consider only the case of the $SL$-move.

Let $K$ be a link in lens space $L(p,1)$ whose band diagram is depicted in Figure~\ref{Ban}.
The virtual quandle $VQ(K)$ has generators $x_1,\dots,x_n,y_1,\dots,y_n,z_1,\dots,z_k$ and the set of relations $I\cup B \cup S$. Denote by $K_1$ the link having the band diagram which is obtained from the band diagram of $K$ by the $SL$-move (see Figure \ref{exx69}). Fix an orientation of $K$ and take the induced orientation on $K_1$: note that the arcs of $K_1$  labelled by $Y_1$, $\dots$, $Y_p$ are oriented  as the arc  of $K$ labelled by $y_1$.

\begin{figure}[bht!]
\noindent\centering{
\includegraphics[width=110mm]{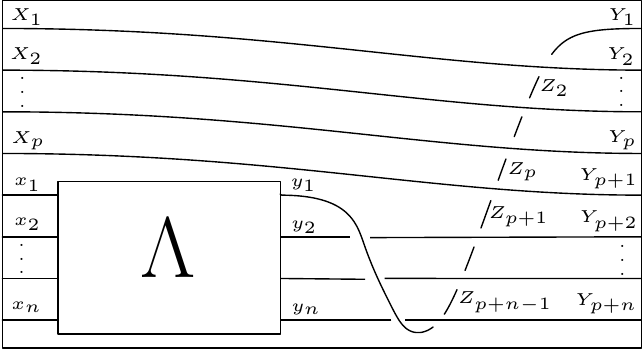}}
\caption{The band diagram after  the $SL$-move.}
\label{exx69}
\end{figure}

The virtual quandle $VQ(K_1)$ has generators $x_1$, $\dots$, $x_n$, $y_1$, $\dots$, $y_n$, $z_1$, $\dots$, $z_k$, $X_1$, $\dots$, $X_p$, $Y_1$, $\dots$, $Y_{p+n}$, $Z_2$, $\dots$, $Z_{p+n-1}$. The set of inner relations $I_1$ consists of the set $I$ and the following relations
\begin{align}
\label{eq3}X_1&=Y_2, &X_2&=Y_3, &&\dots, &X_p&=Y_{p+1},&&\\
y_2^{y_1^{\varepsilon_1}}&=Y_{p+2}, &y_3^{y_1^{\varepsilon_1}}&=Y_{p+3}, &&\dots, \label{eq4}&y_n^{y_1^{\varepsilon_1}}&=Y_{p+n},&&\\
\label{eq6}Z_{2}^{Y_{2}^{\varepsilon_{1}}}&=Y_1, &Z_{3}^{Y_{3}^{\varepsilon_{1}}}&=Z_2, &&\dots, &Z_{p+1}^{Y_{p+1}^{\varepsilon_{1}}}&=Z_p,&&\\
\label{eq5}Z_{p+2}^{Y_{p+2}^{\varepsilon_{2}}}&=Z_{p+1}, &Z_{p+3}^{Y_{p+3}^{\varepsilon_{3}}}&=Z_{p+2}, &&\dots, &Z_{p+n-2}^{Y_{p+n-1}^{\varepsilon_{n-1}}}&=Z_{p+n-1},&y_1^{Y_{p+n}^{\varepsilon_n}}&=Z_{p+n-1}.&
\end{align}
The set of boundary relations $B_1$ consists of the following $p+n$ relations
\begin{equation}\label{eq7}
f(X_1)=Y_1,~~\dots,~~f(X_p)=Y_p,~~f(x_1)=Y_{p+1},~~\dots,~~f(x_n)=Y_{p+n}
\end{equation}
and one operator relation
\begin{equation}\label{eq8}
Y_{p+n}^{\varepsilon_n}Y_{p+n-1}^{\varepsilon_{n-1}}\dots Y_{p+2}^{\varepsilon_2}Y_{p+1}^{\varepsilon_1}\dots Y_1^{\varepsilon_1} \equiv1.
\end{equation}
The set of splitting relations $S_1$ is the same as the set of splitting relations $S$ and consists of the relations $xf(x)\dots f^{p-1}(x)\equiv1$, $f^p(x)=x$ for all $x$.

From the set of relations (\ref{eq6}) and (\ref{eq5}) we see that every element $Z_i$ can be expressed via $y_1$ and $Y_1,Y_2,\dots,Y_{p+n}$. Therefore we can delete the elements $Z_2,Z_3,\dots,Z_{p+n-1}$ from the set of generators and replace the set of relations (\ref{eq6}) and (\ref{eq5}) by the relation
\begin{equation}\label{eq80}
y_1^{Y_{p+n}^{\varepsilon_n}Y_{p+n-1}^{\varepsilon_{n-1}}\dots Y_{p+2}^{\varepsilon_2}Y_{p+1}^{\varepsilon_1}\dots Y_2^{\varepsilon_2}}=Y_1.
\end{equation}
Using boundary relation (\ref{eq8}) and the fact that $Y_1^{Y_1^{\varepsilon_1}}=Y_1$ we conclude from (\ref{eq80}) that $Y_1=y_1$.

  From inner relations (\ref{eq3}) and (\ref{eq4}) we see that all the generators $Y_2,Y_3,\dots, Y_{p+n}$ can be expressed via $X_1,\dots,X_p$, $y_1,\dots,y_n$. Therefore we can delete all  these generators from the generating set changing them by their expressions via $X_1,\dots,X_p$, $y_1,\dots,y_n$ in the relations where they take part. Moreover, from the set of relations (\ref{eq7}) we can express $X_1,\dots,X_p$ via $x_1$ ($X_i=f^{p+1-i}(x_1)$) and delete all the elements $X_1,\dots,X_p$ from the set of generators.

After these manipulations we see that $VQ(K_1)$ is generated by the elements $x_1,\dots,x_n$, $y_1,\dots,y_n$, $z_1,\dots,z_k$ which are the same as for $VQ(K)$. The set of inner relations $I_1$ coincides with the set $I$. The set $B_1$ of boundary relations consists of the following relations
\begin{equation}\label{eq10}
f^{p+1}(x_1)=y_1,~~f(x_2)=y_2^{y_1^{\varepsilon_1}},~~\dots,~~f(x_n)=y_n^{y_1^{\varepsilon_1}}
\end{equation}
and one operator relation
\begin{equation}\label{eq11}
\left(y_{n}^{y_1^{\varepsilon_1}}\right)^{\varepsilon_n}\left(y_{n-1}^{y_1^{\varepsilon_1}}\right)^{\varepsilon_{n-1}}\dots\left(y_{2}^{y_1^{\varepsilon_1}}\right)^{\varepsilon_2}f(x_1)^{\varepsilon_1}f^2(x_1)^{\varepsilon_1}\dots f^{p+1}(x_1)^{\varepsilon_1}\equiv1.
\end{equation}
Using the splitting relation and the equality $y_1^{y_1^{-\varepsilon_1}}=y_1$ we can rewrite the boundary relations (\ref{eq10}) in the following way
\begin{equation}\label{eq12}
f(x_1)^{y_1^{-\varepsilon_1}}=y_1,~~f(x_2)^{y_1^{-\varepsilon_1}}=y_2,~~\dots,~~f(x_n)^{y_1^{-\varepsilon_1}}=y_n.
\end{equation}
 By Lemma  \ref{Lem1} we have $f(x_1)^{\varepsilon_1}f^2(x_1)^{\varepsilon_1}\dots f^{p}(x_1)^{\varepsilon_1}\equiv1$ and using the fact that $f^{p+1}(x_1)^{\varepsilon_1}=y_1$ we can rewrite equality (\ref{eq11}) in the form
 $\left(y_{n}^{\varepsilon_n}y_{n-1}^{\varepsilon_{n-1}}\dots y_{2}^{\varepsilon_2}y_1^{\varepsilon_1}\right)^{y_1^{\varepsilon_1}}\equiv1$,
 which is equivalent to the relation $y_{n}^{\varepsilon_n}y_{n-1}^{\varepsilon_{n-1}}\dots y_{2}^{\varepsilon_2}y_1^{\varepsilon_1}\equiv1$.

 Now the only difference between $VQ(K)$  and $VQ(K_1)$ is in the boundary relations (\ref{eq12}). However if we change $f$ by $g=i_{y_1}^{\varepsilon_1}f$, then we obtain completely the same relations in both quandles since by Lemma \ref{Lem1} the splitting relations for $f$ and for $g$ are the same.
\end{proof}

\section{Properties of the virtual quandle invariant}\label{exex}
In this section we investigate some properties of the virtual quandle  and we prove that it is an essential invariant.

At first, we recall some results on the homology of links in lens spaces 
(see  \cite[Lemma 4]{catmanmul} and \cite[Proposition 2]{gabman2}). If $K$ is  an oriented  link  in $L(p,1)$ with components $K_1,\ldots,K_m$  represented via a band diagram with $n$ right boundary points, then the homology class $[K_j]$ of $K_j$ in $H_1(L(p,1))$ has the following form
\begin{equation}\label{homj}
[K_j]=\sum_{i_j=1_j}^{n_j}\varepsilon_{i_j}~{\rm mod}~p, 
\end{equation}
where $1_j,\dots,n_j$ are the labels of the right boundary points which correspond to  $K_j$ and $\varepsilon_{i_j}=\pm 1$ according to the rule described in the previous section. Moreover, 
$$H_1(K):=H_1(L(p,1)\setminus N(K))=\mathbb Z_d \oplus_{j=1}^m \mathbb Z,$$
where $d=\gcd(p,[K_1],\ldots, [K_m])$. 

The following result connects the virtual quandle with the homology of the link.

\begin{prp}\label{prprpr} Let $K\subset L(p,1)$ be a  link  with components $K_1,\ldots,K_m$. Let $W(K)$ be the virtual quandle obtained from $VQ(K)$ by adding relations $x\equiv1$, for all  $x\in X$. Then we have
$$W(K)=\langle t_1, \dots, t_m~|~\forall j~~f^{\gcd([K_j],p)}(t_j)=t_j, \forall x~~x\equiv1\rangle$$
where $[K_j]$ denotes the homology class of $K_j$ in $H_1(L(p,1))$.  
\end{prp}
\begin{proof} 
The virtual quandle $W(K)$ is obviously a homomorphic image of $VQ(K)$ and we can easily represent $W$ by generators and relations transforming relations from $I\cup B\cup S$ using relations $x\equiv1$ for all $x$.

The boundary relation $y_n^{\varepsilon_n}y_{n-1}^{\varepsilon_{n-1}}\dots y_1^{\varepsilon_1}\equiv1$ and the splitting relation $xf(x)\dots f^{p-1}(x)\equiv1$ follow directly from the relation $x\equiv1$ for all $x$. Therefore in $W$ from the set $B$ of boundary relations we have only $f(x_1)=y_1$, $\dots$, $f(x_n)=y_n$ and from the set $S$ of splitting relations we have only the relation $f^p(x)=x$ for all $x$. Every inner relation which follows from crossings of the diagram has the form $x^y=z$ (see Figure \ref{crosrule}), therefore since  $y\equiv1$ in $W$ this relation will give us $x=z$.
It means that inner relations just identify all arcs belonging to the same component of a link and lying between two boundary points. If we denote by $t_j$ the generator corresponding to the $j$-th component of a link, we obtain the following presentation for $W(K)$.
$$W(K)=\langle t_1,\dots,t_m~|~\forall j~~f^{\varepsilon_{1_j}+\dots+\varepsilon_{m_j}}(t_j)=t_j, \forall x~~f^p(x)=x, \forall x~~x\equiv1\rangle$$
The relations $f^{\varepsilon_{1_j}+\dots+\varepsilon_{m_j}}(t_j)=t_j$ and $f^p(t_j)=t_j$ together are equivalent to the relation $f^{\gcd(\varepsilon_{1_j}+\dots+\varepsilon_{m_j},p)}(t_j)=t_j$ which by equality (\ref{homj}) is equivalent to the relation $f^{\gcd([K_j],p)}(t_j)=t_j$, therefore $W(K)$ has the following presentation
$$W(K)=\langle t_1, \dots, t_m~|~\forall j~~f^{\gcd([K_j],p)}(t_j)=t_j, \forall x~~x\equiv1\rangle$$
and the statement is proved.
\end{proof}
 
Let $\pi:S^3\to L(p,q)$ be the universal covering. An invariant of links in $L(p,q)$   is called \textit{essential} if it is able to distinguish at least a couple of links having equivalent liftings under $\pi$. The fundamental quandle for links in lens spaces is an inessential invariant since it is invariant under cyclic coverings (see \cite{fenrou}). On the contrary, for the virtual quandle invariant  we have the following result.  
\begin{prp}\label{esss} The virtual quandle is an essential invariant for links in $L(p,1)$. 
\end{prp}
\begin{proof} In order to prove that the virtual quandle is an essential invariant we need to find two links $K_1$ and $K_2$ in $L(p,1)$ which have equivalent liftings and different virtual quandles. Let us consider two links in $L(4,1)$ depicted in Figure \ref{ccc2}.
\begin{figure}[h]
\noindent\centering{
\includegraphics[width=100mm]{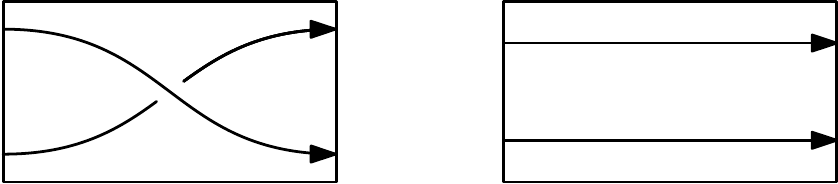}
}
\caption{Two links in $L(4,1)$ with equivalent liftings in $S^3$. }
\label{ccc2}
\end{figure}

This links have equal liftings (see \cite{manf}). By Example \ref{xxx} and Example \ref{xx} we have
\begin{align}
\notag VQ(K_1)&=\langle x~|~f^2(x)=x, xf(x)\equiv1\rangle\\
\notag VQ(K_2)&=\langle x, y~|~ f(x)=x,f(y)=y,yx\equiv1, \forall z~~z^4\equiv1\rangle
\end{align}
Let $W_1$ and $W_2$ be virtual quandles obtained from $VQ(K_1)$ and $VQ(K_2)$, respectively, adding the relations $z\equiv1$ for all $z$. Then the virtual quandles
\begin{align}
\notag W_1&=\langle x~|~f^2(x)=x, \forall z~~z\equiv1\rangle\\
\notag W_2&=\langle x, y~|~ f(x)=x, f(y)=y, \forall z~~z\equiv1\rangle
\end{align}
are not isomorphic: they both have two elements, but  in $W_1$ the automorphism $f$ permutes  the elements, while in $W_2$ the automorphism $f$ fix both of them. Then also  $VQ(K_1)$ and $VQ(K_2)$ are obviously not isomorphic.
\end{proof}

It would be interesting to investigate relationships between the virtual quandle and other link invariants: the fundamental group and the augmented  fundamental rack (see \cite{fenrou}).

~\\

\noindent Alessia CATTABRIGA\\
Department of Mathematics, University of Bologna\\
Piazza di Porta San Donato 5, 40126 Bologna, ITALY\\
e-mail: alessia.cattabriga@unibo.it

~\\
\noindent Timur NASYBULLOV\\
Department of Mathematics, KU Leuven Kulak\\
Etienne Sabbelaan 53, 8500 Kortrijk, BELGIUM\\
e-mail: timur.nasybullov@mail.ru
\end{document}